\newtheorem{thm}{Theorem}[section]
\newtheorem{prop}[thm]{Proposition}
\theoremstyle{definition}
\newtheorem{rem}[thm]{Remark}
\numberwithin{equation}{section}
\begin{document}


\baselineskip=17pt



\title{Use of difference tables at studying of properties of binomial coefficients}


\author{Vladimir L. Gavrikov\\
Siberian Federal University\\ 
Svobodny pr. 79, 660041 Krasnoyarsk, Russia\\
E-mail: vgavrikov@sfu-kras.ru}

\date{}

\maketitle

\begin{abstract}
Some properties of diagonal binomial coefficients were studied in respect to frequency of their units digits. An approach was formulated that led to use of difference tables to predict if certain units digits can appear in the values of binomial coefficients at quadratic terms of the binomial theorem. Frequency distributions of units digits of binomial coefficients contain gaps (zero frequency) under most common numbering systems with supposed exclusion to systems with $2^n$ bases. Binomial coefficient arithmetics may be used to model cell population dynamics in a multicellular organism, which implies that the dynamics obeys power function laws.\\

Key words: binomial coefficients, natural sequence, minor total, multicellular growth\\

Mathematics Subject Classification: Primary 11Axx; Secondary 62P10
\end{abstract}



\section{Introduction}

Since long ago binomial coefficients are known to appear in the binomial theorem of power decomposition of
\begin{equation}
(1 + x)^n = 1 + nx + C_2\cdot x^2 + C_3\cdot x^3 + \dots,
\label{eq:1}
\end{equation}
$x$ being a real variable, $C_2$, $C_3$ etc. are binomial coefficients at quadratic, cubic etc. terms, correspondingly. One of the most vivid representation of the coefficients is Pascal triangle (fig. \ref{fig1}) which is a triangular table where every $n_{th}$ row is the coefficients of the decomposition.

\begin{figure}[tb]
\begin{minipage}[h]{0.49\textwidth}
\center{\includegraphics[width=1.0\textwidth]{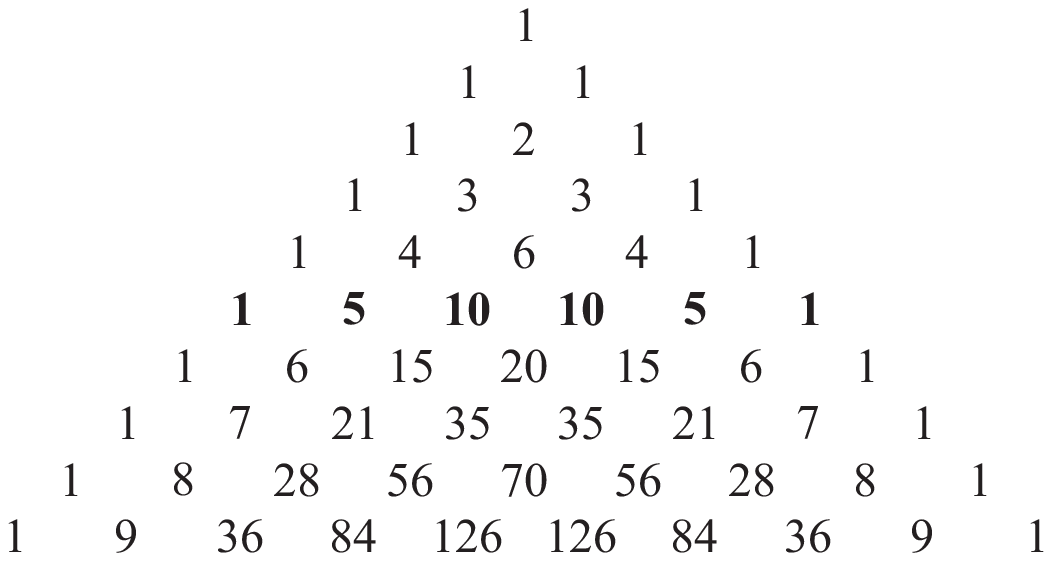} \\ a}
\end{minipage}
\hfill
\begin{minipage}[h]{0.49\textwidth}
\center{\includegraphics[width=1.0\textwidth]{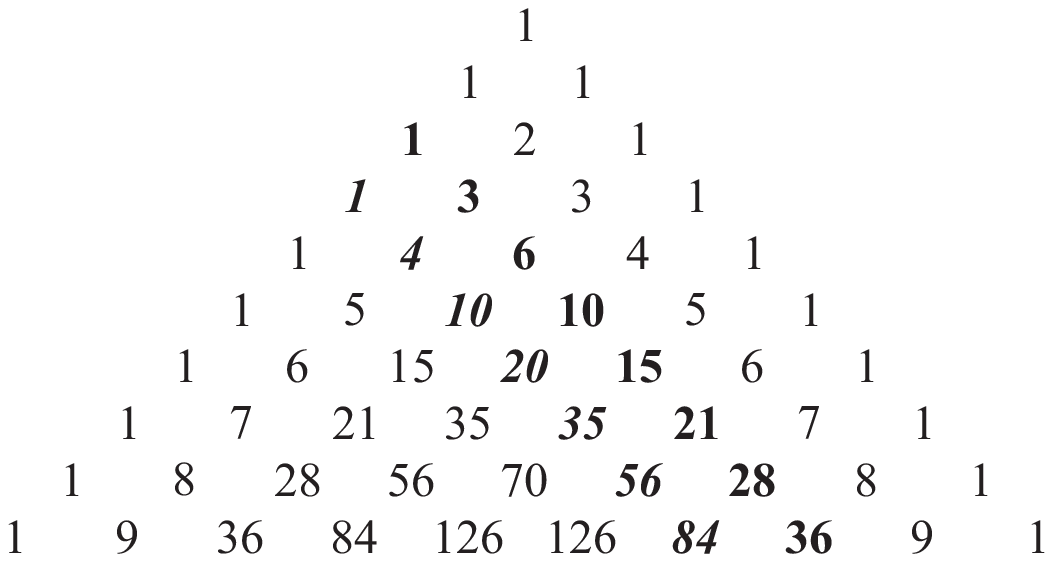} \\ b}
\end{minipage}
\caption{Pascal triangle. \textbf{a} -- A horizontal row with constant $n$ ($n = 5$), as an example, is highlighted in bold face; \textbf{b} -- diagonals of the triangle are highlighted in which $n$ is a variable. Minor totals of natural sequence (binomial coefficients at the quadratic term) are given in bold face. Binomial coefficients at the cubic term are given in bold italic face.}
\label{fig1}
\end{figure}

Arithmetical properties of binomial coefficients are well known and relate mostly to the divisibility by prime numbers and their degrees \cite{Winberg} (Lucas, Kummer theorems etc.) as well as to sums of the coefficients in one row, i.e. where $n$ is a constant (fig. \ref{fig1}a).

It has been shown \cite{Gavrikov} that some subsets of the coefficients from diagonals of the Pascal triangle (fig. \ref{fig1}b) can possess other properties as well. Particularly, the frequency distributions of units digits of binomial coefficients at quadratic terms (which are minor totals of natural sequence) may or may not have gaps dependently on numbering system considered. This property was strictly proved on the example of base-three and base-four numbering systems. In terms of modular arithmetics the properties look as follows, $k$ being a whole number. So, $S_{3k+0} \equiv 0$ (mod $3$), $S_{3k+1} \equiv 1$ (mod $3$), $S_{3k+2} \equiv 0$ (mod $3$) while $S_n \not\equiv 2$ (mod $3$) at any $n$. Then, $S_{4k+0} \equiv 0$ (mod $4$) at even $k$, $S_{4k+0} \equiv 2$ (mod $4$) at odd $k$, $S_{4k+1} \equiv 1$ (mod $4$) at even $k$, $S_{4k+1} \equiv 3$ (mod $4$) at odd $k$. 

An approach was also suggested of how to prove presence or absence of gaps in frequency distributions of units digits of minor totals of natural sequence under numbering system with other bases. Empirically, the gaps are found in numbering systems with bases $5$, $6$, $7$, $9$, $10$ while there are no gaps in systems with bases $4$, $8$, $16$. For example, empirical frequency distributions of units digits for base-seven and base-eight numbering systems are given in fig.\ref{fig2}.

\begin{figure}[tb]
\begin{minipage}[h]{0.49\textwidth}
\center{\includegraphics[width=1.0\textwidth]{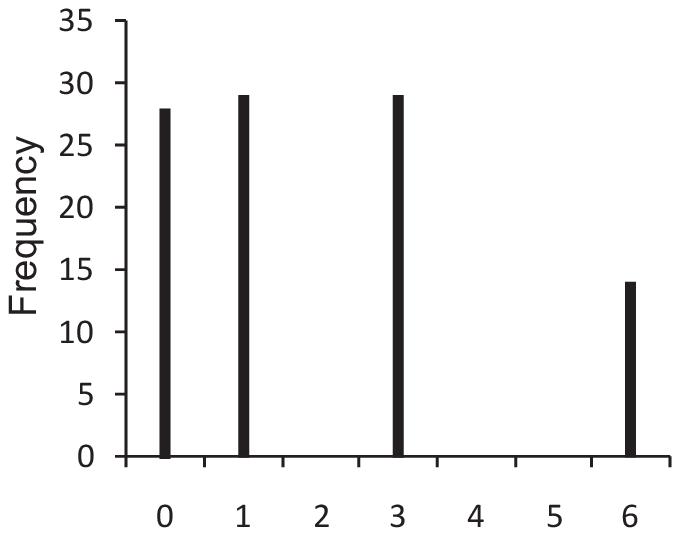} \\ a}
\end{minipage}
\hfill
\begin{minipage}[h]{0.49\textwidth}
\center{\includegraphics[width=1.0\textwidth]{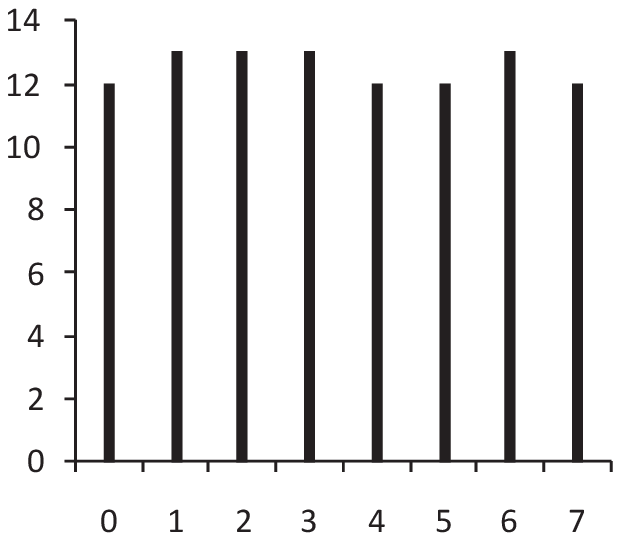} \\ b}
\end{minipage}
\caption{Empirical frequency distributions of units digits in binomial coefficients at the quadratic term (fig.\ref{fig1}b). \textbf{a} -- Base-seven numbering system; \textbf{b} -- base-eight numbering system. Numbers at abscissa axis are digits of the corresponding numbering systems. Frequency is the number of the cases among first hundred of binomial coefficients values.}
\label{fig2}
\end{figure}

\section{Generalization}

Because binomial coefficients at the quadratic term are minor totals of natural sequence (fig. \ref{fig1}b) the formula to calculate them is required which has been known since long ego. The $n_{th}$ minor total of the sequence $S_n$ is
\begin{equation}
S_n = \sum^n_{\kappa =1}\kappa = \frac{n(n+1)}{2}.
\label{eq:2}
\end{equation}

The basic equation of the analysis looks like
\begin{equation}
S_{Lk+i} = L\cdot m + j,
\label{eq:3}
\end{equation}
where $L$ is the base of a numbering system, $i, j$ are units digits in different representations of minor totals $S_n$, obviously $0 \leq  i, j \leq (L-1)$. Values of $k$ and $m$ are whole numbers, which is of core importance.

According to relation \ref{eq:2}
\begin{equation}
S_{Lk+i} = \frac{L\cdot L\cdot k^2 + L\cdot k(2i + 1) + i(i +1)}{2}.
\label{eq:4}
\end{equation}

Then the generalizing relation linking $m$, $k$, $i$ and $j$ can be received from equations \ref{eq:3} and \ref{eq:4} by expressing of $m$ through all other parameters:
\begin{eqnarray}
m& = & \nonumber \\
&= & \frac{k(L\cdot k + (2i + 1))}{2} + \label{eq:5_1}\\
&+ &\frac{\cfrac{i(i +1)}{2}- j}{L}. \label{eq:5_2}
\end{eqnarray}

The wholeness of m depends on evenness/oddness of term $k(L\cdot k + (2i + 1))$ and on whether term $\cfrac{i(i +1)}{2}- j$ is divisible by $L$ with or without a remainder. At given $k$ and $L$, those combinations of $i$ and $j$ that ensure wholeness of $m$ determine $S_{Lk+i} \equiv j \ (mod \ L)$.

Thus the generalizing relation \ref{eq:5_1}--\ref{eq:5_2} can be used to predict the units digits in values of binomial coefficients at quadratic terms ($S_n$) at a given base of numbering system $L$.

Properties of term \ref{eq:5_1} depend on evenness/oddness of $k$ and $L$. If $L$ is odd then term \ref{eq:5_1} is always a whole number ($k(L\cdot k + (2i + 1)) \equiv 0 \ (mod \ 2)$). If $L$ is even then at even $k$ term \ref{eq:5_1} is a whole number but at odd $k$ the term \ref{eq:5_1} is a fraction: ($k(L\cdot k + (2i + 1)) \equiv \cfrac{1}{2} \ (mod \ 2)$).

It is easy to see that $\cfrac{i(i +1)}{2}- j$ brings about a table (Table \ref{tab:1}) that may called a 'difference table' with contains all the possible differences between $\cfrac{i(i +1)}{2}$ and $j$. This table can be used to find the sought quantities of $j$.

\begin{table}[htb]
  \centering
  \caption{A fragment of difference table. First eight values of $\cfrac{i(i +1)}{2}$ and $j$ are shown.}
    \begin{tabular}{rrrrrrrrrrc}
    \hline
    j&&\multicolumn{8}{c}{$\cfrac{i(i +1)}{2}$}\\ \cline{3-11}
     &    & 0     & 1     & 3     & 6     & 10    & 15    & 21    & 28 &$\cdots$ \\
    \hline
    0 &   & 0     & 1     & 3     & 6     & 10    & 15    & 21    & 28 &$\cdots$ \\
    1 &   & -1    & 0     & 2     & 5     & 9     & 14    & 20    & 27 &$\cdots$ \\
    2 &   & -2    & -1    & 1     & 4     & 8     & 13    & 19    & 26 &$\cdots$ \\
    3 &   & -3    & -2    & 0     & 3     & 7     & 12    & 18    & 25 &$\cdots$ \\
    4 &   & -4    & -3    & -1    & 2     & 6     & 11    & 17    & 24 &$\cdots$ \\
    5 &   & -5    & -4    & -2    & 1     & 5     & 10    & 16    & 23 &$\cdots$ \\
    6 &   & -6    & -5    & -3    & 0     & 4     & 9     & 15    & 22 &$\cdots$ \\
    7 &   & -7    & -6    & -4    & -1    & 3     & 8     & 14    & 21 &$\cdots$ \\
$\cdots$& &$\cdots$ &$\cdots$ &$\cdots$ &$\cdots$ &$\cdots$ &$\cdots$ &$\cdots$ &$\cdots$ &$\cdots$ \\
    \hline
    \end{tabular}%
  \label{tab:1}%
\end{table}%

\begin{rem}
The difference table may be used to predict not only units digits in $S_n$ but other digits as well. For example, considering $S_{L^2k+i} = L^2\cdot m + j$ one can study appearance of tens-and-units digits of base-$L$ numbering system. An instance for ternary numbering system is given in the next section.
\end{rem}

\section{Propositions}
In this section, examples of difference table usage are considered. These are cases of base-seven, base-eight and ternary numbering systems.
\begin{prop}
In base-seven system, $S_n \equiv j \ (mod \ 7)$ where $j \in \{ 0, 1, 3, 6 \}$ while $S_n \not\equiv \alpha \ (mod \ 7)$ where $\alpha \in \{ 2, 4, 5\}$ for any $n$.
\end{prop}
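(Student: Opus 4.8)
The plan is to specialize the general relation \eqref{eq:5_1}--\eqref{eq:5_2} to $L=7$ and then read the admissible units digits $j$ directly off the difference table. First I would fix $L=7$ and observe that, since $7$ is odd, term \eqref{eq:5_1}, namely $k(7k+2i+1)/2$, is a whole number for every integer pair $(k,i)$: when $k$ is odd the factor $7k+2i+1$ is even, and when $k$ is even the factor $k$ itself is even, so in either case the product $k(7k+2i+1)$ is even. This is precisely the observation already recorded for odd bases, and it is the step that makes the whole argument uniform in $k$.

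Consequently, with term \eqref{eq:5_1} already an integer, the wholeness of $m$ in \eqref{eq:5_1}--\eqref{eq:5_2} reduces entirely to the wholeness of term \eqref{eq:5_2}. That is, $S_{7k+i}\equiv j\pmod 7$ holds for some whole $m$ exactly when
\[
7 \ \Big|\ \Big(\tfrac{i(i+1)}{2}-j\Big), \qquad\text{equivalently}\qquad j \equiv \tfrac{i(i+1)}{2}\pmod 7 .
\]
The crucial feature here --- and the point I would emphasize --- is that this divisibility condition does not involve $k$ at all. Hence the units digit $j$ of $S_n$ depends only on $i=n\bmod 7$, and the a priori infinite problem collapses to a finite check over $i\in\{0,1,\dots,6\}$.

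The remaining step is to scan the difference table (Table~\ref{tab:1}) column by column. For each $i\in\{0,\dots,6\}$ I would locate, in the column headed by $\tfrac{i(i+1)}{2}$, the row $j$ for which the tabulated difference is a multiple of $7$ (including the zero entries); equivalently I would reduce the triangular numbers $0,1,3,6,10,15,21$ modulo $7$. These reductions are $0,1,3,6,3,1,0$, so the set of attainable $j$ is exactly $\{0,1,3,6\}$, whereas $2$, $4$ and $5$ never occur. Combined with the $k$-independence established above, this yields $S_n\equiv j\pmod 7$ with $j\in\{0,1,3,6\}$ and $S_n\not\equiv\alpha\pmod 7$ for $\alpha\in\{2,4,5\}$ and every $n$.

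I do not expect a serious obstacle, as the argument is essentially bookkeeping. The one place deserving a moment's care is the verification that term \eqref{eq:5_1} is integral for all $k$ (the parity argument above), since this is exactly what guarantees that the outcome is governed by the single column index $i$ rather than by the pair $(k,i)$; once that is in hand, the difference table does the rest.
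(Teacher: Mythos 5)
Your proposal is correct and follows essentially the same route as the paper: since $L=7$ is odd, term \eqref{eq:5_1} is always an integer, so the problem reduces to the $k$-independent condition $7 \mid \bigl(\tfrac{i(i+1)}{2}-j\bigr)$, which is settled by reducing the triangular numbers $0,1,3,6,10,15,21$ modulo $7$ (i.e., by reading the $7\times 7$ difference table). Your explicit parity argument for the integrality of term \eqref{eq:5_1} and the explicit residues $0,1,3,6,3,1,0$ merely spell out details the paper leaves to the table.
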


\begin{proof}
Because $7$ is an odd number, term \ref{eq:5_1} is a whole number. Therefore combinations of $i$ and $j$ are sought that ensure the wholeness of term \ref{eq:5_2}. In other words, the values of the difference table must be divisible by $7$ without a remainder.

For base-seven system, one should consider a $7X7$ difference table (Table \ref{tab:2}). As follows from the table, only values of $j \in \{ 0, 1, 3, 6 \}$ satisfy to the condition '$m$ is a whole number' and therefore $S_n \equiv j \ (mod \ 7)$ only from this set (Fig. \ref{fig2}a).
\end{proof}

\begin{table}[htb]
  \centering
  \caption{A $7X7$ difference table. Values of differences divisible by $7$ without a remainder are given on gray background. Values if $j$ satisfying the condition '$m$ is a whole number' are given in bold face.}
    \begin{tabular}{rrrrrrrrr}
    \hline
    j&&\multicolumn{7}{c}{$\cfrac{i(i +1)}{2}$}\\ \cline{3-9}
     &    & 0     & 1     & 3     & 6     & 10    & 15    & 21 \\
    \hline
    \textbf{0} &   & \cellcolor{lightgray} 0    & 1     & 3     & 6     & 10    & 15    & 21 \\
    \textbf{1} &   & -1    & 0     & 2     & 5     & 9     & \cellcolor{lightgray} 14    & 20 \\
    2 &   & -2    & -1    & 1     & 4     & 8     & 13    & 19 \\
    \textbf{3} &   & -3    & -2    & \cellcolor{lightgray} 0     & 3     & 7     & 12    & 18 \\
    4 &   & -4    & -3    & -1    & 2     & 6     & 11    & 17 \\
    5 &   & -5    & -4    & -2    & 1     & 5     & 10    & 16 \\
    \textbf{6} &   & -6    & -5    & -3    & \cellcolor{lightgray}0     & 4     & 9     & 15 \\
    \hline
    \end{tabular}%
  \label{tab:2}%
\end{table}%

\begin{table}[htb]
  \centering
  \caption{A $8X8$ difference table. Values of differences divisible by $8$ without a remainder or with a remainder $1/2$ or $-1/2$ are given on gray background. Values if $j$ satisfying the condition '$m$ is a whole number' are given in bold face.}
    \begin{tabular}{rrrrrrrrrrc}
    \hline
    j&&\multicolumn{8}{c}{$\cfrac{i(i +1)}{2}$}\\ \cline{3-10}
     &    & 0     & 1     & 3     & 6     & 10    & 15    & 21    & 28 \\
    \hline
    \textbf{0} &   & \cellcolor{lightgray}0     & 1     & 3     & 6     & 10    & 15    & 21    &\cellcolor{lightgray} 28 \\
    \textbf{1} &   & -1    & \cellcolor{lightgray}0     & 2     & 5     & 9     & 14    &\cellcolor{lightgray} 20    & 27 \\
    \textbf{2} &   & -2    & -1    & 1     & \cellcolor{lightgray}4     & \cellcolor{lightgray}8     & 13    & 19    & 26 \\
    \textbf{3} &   & -3    & -2    & \cellcolor{lightgray}0     & 3     & 7     & \cellcolor{lightgray}12    & 18    & 25 \\
    \textbf{4} &   & \cellcolor{lightgray}-4    & -3    & -1    & 2     & 6     & 11    & 17    & \cellcolor{lightgray}24 \\
    \textbf{5} &   & -5    & \cellcolor{lightgray}-4    & -2    & 1     & 5     & 10    & \cellcolor{lightgray}16    & 23 \\
    \textbf{6} &   & -6    & -5    & -3    & \cellcolor{lightgray}0     & \cellcolor{lightgray}4     & 9     & 15    & 22 \\
    \textbf{7} &   & -7    & -6    & \cellcolor{lightgray}-4    & -1    & 3     & \cellcolor{lightgray}8     & 14    & 21 \\
    \hline
    \end{tabular}%
  \label{tab:3}%
\end{table}%

\begin{prop}
In base-eight system, $S_n \equiv j \ (mod \ 8)$ where \\ $j \in \{ 0, 1, 2, 3, 4, 5, 6, 7 \}$.
\end{prop}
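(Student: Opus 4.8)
The plan is to reuse the difference-table argument of the preceding (base-seven) proposition with $L=8$, the one genuine novelty being that $8$ is even, so the parity of $k$ now enters. Writing $n = 8k+i$ with $0\le i\le 7$ and abbreviating $D=\tfrac{i(i+1)}{2}-j$, relation \ref{eq:5_1}--\ref{eq:5_2} reads $m=\tfrac{k(8k+2i+1)}{2}+\tfrac{D}{8}$. First I would record the parity behaviour of term \ref{eq:5_1} already noted before Table \ref{tab:1}: it is a whole number when $k$ is even and a whole number plus $\tfrac12$ when $k$ is odd.

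The second step converts ``$m$ is whole'' into a congruence on $D$. For even $k$ the term \ref{eq:5_1} is integral, so $m\in\mathbb{Z}$ iff $\tfrac{D}{8}\in\mathbb{Z}$, i.e.\ $D\equiv 0\pmod 8$; for odd $k$ the residual $\tfrac12$ of term \ref{eq:5_1} must be absorbed by a half-integer value of $\tfrac{D}{8}$, i.e.\ $D\equiv 4\pmod 8$. Together these are exactly the cells shaded in Table \ref{tab:3} (remainder $0$, or remainder $\pm\tfrac12$ after division by $8$), so a residue $j$ is attained precisely when its row contains an entry with $D\equiv 0\pmod 4$.

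The third step is then a finite inspection: scanning the eight rows of Table \ref{tab:3}, I would verify that each $j\in\{0,\dots,7\}$ has at least one shaded entry, exhibiting a witness for each (for instance $j=2$ from $i=4$, where $D=8\equiv 0\pmod 8$ with even $k$, and $j=7$ from $i=2$, where $D=-4\equiv 4\pmod 8$ with odd $k$). Since every row is covered, every residue occurs, which is the assertion. In fact I expect the even-$k$ branch to suffice on its own: the values $\tfrac{i(i+1)}{2}\bmod 8$ for $i=0,\dots,7$ are $0,1,3,6,2,7,5,4$, already a full system of residues, so even the single block $S_0,\dots,S_7$ realizes all of $\{0,\dots,7\}$.

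I do not expect a serious obstacle, since the claim reduces to a finite table check; the only point demanding care is the odd-$k$ half-integer bookkeeping, which is where base-eight departs from base-seven. There the condition $D\equiv 0\pmod 7$ left the rows $j\in\{2,4,5\}$ empty and produced gaps, whereas here every row carries both a $D\equiv 0\pmod 8$ and a $D\equiv 4\pmod 8$ cell, so no gap can arise. As an independent sanity check I would note that $S_{n+8}\equiv S_n+4\pmod 8$, whence the residues repeat with period $16$ and listing $S_0,\dots,S_{15}$ modulo $8$ confirms the absence of gaps predicted by the difference table.
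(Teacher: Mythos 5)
Your proposal is correct and follows essentially the same route as the paper: split on the parity of $k$, translate ``$m$ is whole'' into $D\equiv 0\pmod 8$ (even $k$) or $D\equiv 4\pmod 8$ (odd $k$), and check that every row of the $8\times 8$ difference table contains such a cell. Your closing observation that the even-$k$ branch already suffices --- since $\tfrac{i(i+1)}{2}$ for $i=0,\dots,7$ runs through a complete residue system modulo $8$ --- is a valid shortcut the paper does not make explicit, but it does not change the underlying argument.
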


\begin{proof}
Because $8$ is an even number, the term \ref{eq:5_1} may be whole number (for even $k$) and a fractional number (for odd $k$). Namely, for odd $k$ $\cfrac{k(L\cdot k + (2i + 1))}{2} \equiv \cfrac{1}{2} \ (mod \ 2)$. Therefore, the differences are sought that are divisible by $8$ either without a remainder or with the remainder $1/2$ or $-1/2$.

A $8X8$ difference table should be considered (Table \ref{tab:3}). As follows from the table, all the digits of the base-eight system satisfy the condition '$m$ is a whole number', i.e. $S_n \equiv j \ (mod \ 8)$ for all the $j \in \{ 0, 1, 2, 3, 4, 5, 6, 7 \}$ (Fig. \ref{fig2}b).
\end{proof}

\begin{table}[htb]
  \centering
  \caption{A $9X9$ difference table. $j_T$ is a ternary representation of $j$ numbers. Values of differences containing $9$ are given on gray background. Tens-and-units values of $j_T$ that do not satisfy the condition '$m$ is a whole number' are given in bold italic face.}
    \begin{tabular}{rrrrrrrrrrrr}
    \hline
    j& $j_T$&&\multicolumn{9}{c}{$\cfrac{i(i +1)}{2}$}\\ \cline{4-12} \\
     &     &   & 0     & 1     & 3     & 6     & 10    & 15    & 21    & 28 & 36  \\
    \hline
    0 & 0  &   & \cellcolor{lightgray}0     & 1     & 3     & 6     & 10    & 15    & 21    & 28 & \cellcolor{lightgray}36 \\
    1 & 1  &   & -1    & \cellcolor{lightgray}0     & 2     & 5     & \cellcolor{lightgray}9     & 14    & 20    & 27 & 35 \\
    2 & 2  &   & -2    & -1    & 1     & 4     & 8     & 13    & 19    & 26 & 34 \\
    3 & 10 &   & -3    & -2    & \cellcolor{lightgray}0     & 3     & 7     & 12    & \cellcolor{lightgray}18    & 25 & 33 \\
    4 & \textbf{\textit{11}} &   & -4    & -3    & -1    & 2     & 6     & 11    & 17    & 24 & 32 \\
    5 & \textbf{\textit{12}} &   & -5    & -4    & -2    & 1     & 5     & 10    & 16    & 23 & 31 \\
    6 & 20 &   & -6    & -5    & -3    & \cellcolor{lightgray}0     & 4     & \cellcolor{lightgray}9     & 15    & 22 & 30 \\
    7 & \textbf{\textit{21}} &   & -7    & -6    & -4    & -1    & 3     & 8     & 14    & 21 & 29 \\
    8 & \textbf{\textit{22}} &   & -8    & -7    & -5    & -2    & 2     & 7     & 13    & 20 & 28 \\
    \hline
    \end{tabular}%
  \label{tab:4}%
\end{table}%

\begin{rem}
Not all even $L$ will bring about that $j$ covers all the digits of the numbering system. For example, for $L = 6$, $L = 10$ and others there will be gaps in the sets of units digits. Empirically, only the cases $L = 2^c$ ($c$ being a whole number) lead to that $j$ covers all the system digits.
\end{rem}

\begin{prop}
In ternary system, $S_n \not\equiv 11$ $(mod \ 100)$ and $S_n \not\equiv 21$ $(mod \ 100)$ for any $n$.
\end{prop}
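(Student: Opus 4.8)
The plan is to reuse the tens-and-units extension indicated in the Remark following Table~\ref{tab:1}: take $L=3$ and analyse $S_n$ modulo $L^2 = 9$, since the last two ternary digits of $S_n$ are precisely its residue mod $9$ written in base $3$. Concretely, I would write $n = 9k+i$ with $0\le i\le 8$ and $S_n = 9m + j$ with $0\le j\le 8$. Because $11_3 = 4$ and $21_3 = 7$ while $100_3 = 9$, proving the proposition amounts to showing that the residue $j$ never equals $4$ or $7$.

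First I would instantiate the generalizing relation \eqref{eq:5_1}--\eqref{eq:5_2} with $9$ in the role of the base, obtaining
\[
m = \frac{k\bigl(9k + 2i + 1\bigr)}{2} + \frac{\tfrac{i(i+1)}{2} - j}{9}.
\]
Exactly as in the base-seven proposition, the first summand is always an integer: $9$ is odd and $2i+1$ is odd, so $k(9k+2i+1)$ is even for every $k$ (if $k$ is even this is clear; if $k$ is odd, then $9k+2i+1$ is even). Consequently the integrality of $m$ is governed entirely by the second summand, so $m$ is whole if and only if $9 \mid \bigl(\tfrac{i(i+1)}{2} - j\bigr)$, i.e.\ if and only if $j \equiv \tfrac{i(i+1)}{2}\pmod 9$.

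Next I would read off the reachable residues from the $9\times 9$ difference table (Table~\ref{tab:4}): the column values $\tfrac{i(i+1)}{2}$ for $i=0,\dots,8$ are $0,1,3,6,10,15,21,28,36$, which reduce mod $9$ to $0,1,3,6,1,6,3,1,0$. Hence the attainable set of residues is exactly $\{0,1,3,6\}$, and the rows $j=4$ and $j=7$ contain no multiple of $9$ (the gray cells of Table~\ref{tab:4} are absent in those rows). Translating back to ternary gives $S_n \not\equiv 11_3$ and $S_n \not\equiv 21_3 \pmod{100_3}$ for every $n$, as claimed.

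The argument has no real obstacle beyond correctly setting up the modulus-$9$ reduction; since the effective base is odd, the integrality of term \eqref{eq:5_1} is immediate and the whole proof runs parallel to the base-seven case. The one point worth flagging is \emph{why} the proposition singles out $11_3$ and $21_3$ rather than the full list of missing residues $\{2,4,5,7,8\}$: the residues $5 = 12_3$ and $8 = 22_3$ end in the ternary digit $2$, which is already forbidden by the known units-digit result $S_n \not\equiv 2 \pmod 3$. The genuinely new exclusions are therefore precisely $11_3$ and $21_3$, whose units digit $1$ is by itself admissible but whose two-digit ternary tail never occurs.
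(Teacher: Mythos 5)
Your proof is correct and follows essentially the same route as the paper: reduce the two-ternary-digit condition to residues modulo $9$, instantiate the generalizing relation \eqref{eq:5_1}--\eqref{eq:5_2} with $9$ as the effective base (where oddness makes term \eqref{eq:5_1} automatically whole), and read off from the $9\times 9$ difference table that the attainable residues are $\{0,1,3,6\}$, excluding $4=11_3$ and $7=21_3$. Your closing observation about why only $11_3$ and $21_3$ are singled out (the other gaps $2,5,8$ being already forced by the units-digit result mod $3$) is a useful clarification but does not change the method.
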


\begin{proof}
Converting the relations to decimal system one gets $S_n \not\equiv 4$ $(mod \ 9)$ \\ and $S_n \not\equiv 7$ $(mod \ 9)$.

A $9X9$ difference table should be considered in which difference values are sought that contain $9$ (Table \ref{tab:4}). As follows from the table, neither ternary $11$ nor ternary $21$ are present among values appearing as tens-and-units digits in ternary representation of $S_n$.
\end{proof}

\section{Binomial coefficients and multicellular \\ growth}

Because of whole number and summing nature of diagonal binomial coefficients they may be applied to cell-based modeling of biological growth.

A classical example of whole number modeling is provided by a consideration of population growth of cells each of which doubles in equal time intervals producing the next cell generation. Such a dynamics gives rise to sequence of the cell numbers like $1$, $2$, $4$, $8$, $\dots$ , $2^\beta $, $\beta $ being the number of the cell generation while value $2$ in this particular case being the reproduction factor. In other words, this is an example of how local divisions of cell lead to an \textit{exponential law} of cell population growth.

Within a multicellular organism, however, the cell population growth obeys much more complicated rules. On the example of higher plant organisms the following rules (kinds of cells) may be identified. First, there are the so-called initial cells that preserve the ability to divide in the course of the entire life span of the organism. Second, the cells---immediate descendants of the initials---can divide for some time but sooner or later transform to the third kind of cells. The third kind are remote descendants that are differentiated cells that may be dead or alive but no longer divide. Because the amount of initial cells compared to  other kinds is low a modeling may 'imply' their existence but not to take into account explicitly.

Let's $M$ be the number of differentiated cells and $m$ be the number of dividing cells at the moment (generation number) $0$. The next moment (generation) the number of dividing cells are multiplied by $2$---each of them divides in two. A share of the new cells goes to the pool of differentiated cells and another share remains to be dividing. In order to get a law of the whole population growth it is necessary to suppose how the number of dividing cells alters from generation to generation. Why their number can change is a question of a separate sort and not considered here in detail. One can note, however, that the number of newly differentiated cells may influence the number of the new generation of dividing cells.

Suppose, first, that the number of dividing cells grows from generation to generation linearly, namely, by one cell a generation. Then the whole dynamics of population of $N$ cells in generations from $0$ to $q$ may look as in expressions \ref{eq:6_1}--\ref{eq:6_5}. In expression \ref{eq:6_2} (generation $1$), for example, the term $2m - (m + 1)$ denotes a new portion of differentiated cells while $(m + 1)$ is the new amount of dividing cells which is by one cell bigger as in generation $0$.

\begin{align}
M &+ m &N_0\label{eq:6_1} \\
M &+ 2m - (m + 1) + (m + 1) &N_1 \label{eq:6_2} \\
M &+ 2m - (m + 1) + 2(m + 1) - (m + 2) + (m + 2) &N_2 \label{eq:6_3} \\
M &+ 2m - (m + 1) + 2(m + 1) - (m + 2) + 2(m + 2) - & \label{eq:6_4} \\
& - (m + 3) + (m + 3) &N_3 \nonumber \\
&\dots & \nonumber \\
M &+ 2m + 2(m + 1) + 2(m + 2) + \dots + 2(m + q - 1) - &\label{eq:6_5} \\
& - [(m + 1) + (m + 2) + \dots + (m + q - 1)] &N_q \nonumber
\end{align}

In expression \ref{eq:6_5}, collecting terms and taking into account that $1$ + $2$ + $3$ + $\dots$ + $q - 1$ is $\cfrac{(q - 1)q}{2}$ give $N_q$, the size of the cell population at generation $q$ in the form
\begin{equation}
N_q = M + m + m\cdot q + \frac{(q -1)q}{2},
\label{eq:7}
\end{equation}
which is an equivalent to coefficients of the binomial theorem \ref{eq:1} truncated to the quadratic term. In other words, the idealized multicellular organism grows as minor totals of the natural sequence. A simple corollary of the dynamics is that the growth obeys a \textit{power law}, in this particular case it is a quadratic power function. It is easy to show that if the number of dividing cells stays constant then the entire organism grows linearly. If the number of dividing cells falls linearly then the entire organism undergoes a decay of growth which obeys a power law as well.

The number of dividing cells may alter in a non-linear manner. Suppose then that their number grows as the minor totals of the natural sequence, which is the same as binomial coefficients at the quadratic term. From Pascal triangle (fig. \ref{fig1}b), one can expect that the total growth should obey the pattern of binomial coefficients at the cubic term. In fact, relations between binomial coefficients at the quadratic and cubic terms, $S_n$ and $T_n$, correspondingly, are rather transparent. The summation of $S_n$ naturally gives $T_n$:

\begin{eqnarray}
T_n &=& \sum_{\kappa =1}^n S_{\kappa} = \sum_{\kappa =1}^n \frac{\kappa (\kappa +1)}{2} = \frac{1}{2}\sum_{\kappa =1}^n \kappa ^2 + \frac{1}{2}\sum_{\kappa =1}^n \kappa = \label{eq:8} \\
&=& \frac{n}{12}(n + 1)(2n + 1) + \frac{n}{4}(n + 1) = \frac{n(n + 1)(n + 2)}{2\cdot 3}\nonumber
\end{eqnarray}

\begin{align}
M &+ \frac{m(m + 1)}{2} &N_0\label{eq:9_1} \\
M &+ m(m + 1) - \frac{(m + 1)(m + 2)}{2} + \frac{(m + 1)(m + 2)}{2} &N_1 \label{eq:9_2} \\
M &+ m(m + 1) - \frac{(m + 1)(m + 2)}{2} + (m + 1)(m + 2) - & \label{eq:9_3} \\
& - \frac{(m + 2)(m + 3)}{2} + \frac{(m + 2)(m + 3)}{2} &N_2 \nonumber \\
M &+ m(m + 1) - \frac{(m + 1)(m + 2)}{2} + (m + 1)(m + 2) - & \label{eq:9_4} \\
& - \frac{(m + 2)(m + 3)}{2} + (m + 2)(m + 3) - \nonumber \\
& - \frac{(m + 3)(m + 4)}{2} + \frac{(m + 3)(m + 4)}{2} &N_3 \nonumber \\
&\dots & \nonumber \\
M &+ m(m + 1) + (m + 1)(m + 2) + (m + 2)(m + 3) + \dots &\label{eq:9_5}\\
& + (m + q - 1)(m + q) + \frac{(m + q)(m + q + 1)}{2} - & \nonumber \\
& - \bigg(\frac{(m + 1)(m + 2)}{2} + \frac{(m + 2)(m + 3)}{2} + \dots &\nonumber \\
& + \frac{(m + q - 1)(m + q)}{2} + \frac{(m + q)(m + q + 1)}{2}\bigg) &N_q \nonumber
\end{align}

Expressions \ref{eq:9_1}--\ref{eq:9_5} show the whole cell population dynamics in detail implying that the cell population consists of $M$ differentiated and $\cfrac{m(m + 1)}{2}$ dividing cells at the generation $0$. Analogically to expression \ref{eq:6_2}, in \ref{eq:9_2}, $m(m + 1) - \cfrac{(m + 1)(m + 2)}{2}$ is the new differentiated cell while \\ $\cfrac{(m + 1)(m + 2)}{2}$ is the new number of dividing cells.

Collecting terms in expression \ref{eq:9_5} one gets
\begin{eqnarray}
N_q = M &+& m(m + 1) + \frac{(m + 1)(m + 2)}{2} \label{eq:10} \\
&+& \frac{(m + 2)(m + 3)}{2} + \dots + \frac{(m + q - 1)(m + q)}{2}, \nonumber
\end{eqnarray}
which is with the help of equations \ref{eq:2} and \ref{eq:8} transferred to
\begin{eqnarray}
N_q &=& M + S_m + S_m + S_{(m+1)} + S_{(m + 2)} + \dots + S_{m+q-1} = \label{eq:11} \\
&=& M + S_m + T_{m+q-1} - T_{m-1}. \nonumber
\end{eqnarray}

In equation \ref{eq:11}, the terms $M + S_m - T_{m-1}$ is a mere constant depending on initial conditions ($M$ and $m$). The term $T_{m+q-1}$ gives dynamics of the number of cells in the idealized multicellular organism. Obviously, $T_{m+q-1}$ is a polynomial of the third order.

An important implication of the cell population behavior modeled with diagonal binomial coefficient arithmetics is that it is a power law that governs growth of a multicellular organism, naturally, in terms of cell number. In mathematical modeling of biological objects, the use of polynomial forms at approximating of various biological data has been called into question. While providing a high approximation accuracy the polynomial form often lack profound justifications and interpretability of parameters. The approach presented in the above analysis may help to fill the gap in polynomial usage in biological growth modeling.

\end{document}